\newtheorem{theorem}{Theorem}[section]
\newtheorem{lemma}{Lemma}[section]
\newtheorem{result}{Proposition}[section]
\newtheorem{corollary}{Corollary}[section]
\theoremstyle{definition}
\newtheorem{definition}{Definition}[section]
\newtheorem*{conjecture}{Conjecture}
\begin{document}

\begin{center}
\vskip 1cm{\LARGE\bf On Arithmetic Functions Related to Iterates of the Schemmel Totient Functions
\vskip 1cm
\large
Colin Defant\\
Department of Mathematics\\
University of Florida\\
United States\\
cdefant@ufl.edu}
\end{center}
\vskip .2 in

\begin{abstract}
We begin by introducing an interesting class of functions, known as the Schemmel totient functions, that generalizes the Euler totient function. For each Schemmel totient function $L_m$, we define two new functions, denoted $R_m$ and $H_m$, that arise from iterating $L_m$. Roughly speaking, $R_m$ counts the number of iterations of $L_m$ needed to reach either $0$ or $1$, and $H_m$ takes the value (either $0$ or $1$) that the iteration trajectory eventually reaches. Our first major result is a proof that, for any positive integer $m$, the function $H_m$ is completely multiplicative. We then introduce an iterate summatory function, denoted $D_m$, and define the terms $D_m$-deficient, $D_m$-perfect, and $D_m$-abundant. We proceed to prove several results related to these definitions, culminating in a proof that, for all positive even integers $m$, there are infinitely many $D_m$-abundant  numbers. Many open problems arise from the introduction of these functions and terms, and we mention a few of them, as well as some numerical results.
\end{abstract}

\section{Introduction}

Throughout this paper, $\mathbb{N}$, $\mathbb{N}_0$, and $\mathbb{P}$ will denote the set of positive integers, the set of nonnegative integers, and the set of prime numbers, respectively. For any function $f$, we will write $f^{(1)}=f$ and $f^{(k+1)}=f\circ f^{(k)}$ for all $k\in \mathbb{N}$.
The letter $p$ will always denote a prime number. For any $n\in \mathbb{N}$, $\upsilon_p(n)$ will denote the unique nonnegative integer $k$ such that $p^k\mid n$ and $p^{k+1}\nmid n$. Finally, in the canonical prime factorization $\prod_{i=1}^rp_i^{\alpha_i}$ of a positive integer, it is understood that, for all distinct $i,j\in \{1,2,\ldots,r\}$, we have $p_i\in \mathbb{P}$, $\alpha_i\in \mathbb{N}$, and $p_i\neq p_j$.

The well-known Euler $\phi$ function is defined to be the number of positive integers less than or equal to $n$ that are relatively prime to $n$. For each $m\in\mathbb{N}$, the Schemmel
totient function $L_m(n)$ is defined as the number of positive integers $k\leq n$ such that $\gcd(k+s, n)=1$ for all $s\in\{0,1,\ldots,m-1\}$ \cite{herzog92}. In particular, $L_1=\phi$. For no reason other than a desire to avoid cumbersome notation and the possibility of dealing with undefined objects such as $L_2^{(2)}(6)$, we will define $L_m(0)$ to be $0$ for all positive integers $m$.

For any integer $n>1$, let $p(n)$ be the smallest prime number that divides $n$. Schemmel  \cite{schemmel69} showed that, for any positive integer $m$, $L_m$ is multiplicative. Thus, $L_m(1)=1$. Furthermore, for $n>1$, \begin{equation}\label{1}L_m(n)=\begin{cases} 0, & \mbox{if } p(n)\leq m; \\ \displaystyle n\prod_{p\mid n}\left(1-\frac{m}{p}\right), & \mbox{if } p(n)>m. \end{cases}
\end{equation}
Letting $\prod_{i=1}^rp_i^{\alpha_i}$ be the canonical prime factorization of $n$, we may rewrite the above formula as
\begin{equation}\label{2}L_m(n)=\begin{cases} 0, & \mbox{if } p(n)\leq m; \\ \displaystyle \prod_{i=1}^rp_i^{\alpha_i-1}(p_i-m), & \mbox{if } p(n)>m \end{cases}
\end{equation}
for $n>1$.

In 1929, S. S. Pillai introduced a function that counts the number of iterations of the Euler $\phi$ function needed to reach $1$ \cite{pillai29}.  In the following section, we generalize Pillai's function via the Schemmel totient functions. Then, in the third section, we generalize the concept of perfect totient numbers with the introduction, for each positive integer $m$, of a function $D_m$, which sums the first $R_m$ iterates of $L_m$.

\section{The Functions $R_m$ and $H_m$}

We record the following propositions, which follow immediately from \eqref{2}, for later use.
\begin{result} \label{Res1.1}
 For $x,y,m\in \mathbb{N}$, if $x\vert y$, then $L_m(x)\vert L_m(y)$.
\end{result}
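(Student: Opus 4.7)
The plan is to argue directly from the piecewise formula \eqref{2} by cases. The case $x=1$ is trivial since $L_m(1)=1$ divides every integer. For $x>1$ I would split according to whether $p(x)\le m$ or $p(x)>m$.

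If $p(x)\le m$, then $L_m(x)=0$, and I need $L_m(y)=0$ as well (since $0\mid n$ only when $n=0$). This follows because $x\mid y$ forces $p(x)\mid y$, so $p(y)\le p(x)\le m$, and then \eqref{2} gives $L_m(y)=0$. Thus both sides vanish and divisibility holds.

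If instead $p(x)>m$, write $x=\prod_{i=1}^r p_i^{\alpha_i}$, so every $p_i>m$. Since $x\mid y$, each $p_i$ divides $y$ with $\upsilon_{p_i}(y)\ge \alpha_i$. If $p(y)\le m$ then $L_m(y)=0$ and $L_m(x)\mid 0$ trivially. Otherwise $p(y)>m$, and \eqref{2} gives
$$L_m(x)=\prod_{i=1}^r p_i^{\alpha_i-1}(p_i-m),$$
while $L_m(y)$ is an analogous product ranging over all prime divisors of $y$. For each $i$, the factor $p_i^{\alpha_i-1}(p_i-m)$ from $L_m(x)$ divides the factor $p_i^{\upsilon_{p_i}(y)-1}(p_i-m)$ appearing in $L_m(y)$, and any primes dividing $y$ but not $x$ simply contribute extra factors that do not obstruct divisibility. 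Multiplying over $i$ yields $L_m(x)\mid L_m(y)$.

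I do not expect any serious obstacle: the proposition is essentially a bookkeeping exercise with \eqref{2}. The only subtle point is the handling of the value $0$, where one must recall that $0\mid n$ iff $n=0$ and then observe that a prime witnessing $p(x)\le m$ is inherited by $y$, so $L_m(x)$ and $L_m(y)$ vanish simultaneously.
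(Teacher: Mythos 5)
Your argument is correct and is exactly the route the paper intends: the paper states this proposition as following immediately from \eqref{2}, and your case analysis (handling $p(x)\le m$ via the inherited small prime and $0\mid n$ iff $n=0$, and comparing prime-by-prime factors $p_i^{\alpha_i-1}(p_i-m)$ when $p(x)>m$) simply spells out that routine verification. No gaps; the only subtlety, the treatment of the value $0$, is handled properly.
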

Repeatedly applying Proposition \ref{Res1.1}, we find
\begin{result} \label{Res1.2}
For $x,y,m,r\in \mathbb{N}$, if $x\vert y$, then $L_m^{(r)}(x)\vert L_m^{(r)}(y)$.
\end{result}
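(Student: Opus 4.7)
The plan is to prove the proposition by a straightforward induction on $r$, with Proposition \ref{Res1.1} serving as both the base case and the engine of the inductive step. For $r=1$ the statement is literally Proposition \ref{Res1.1}. Assuming the result holds for some $r \ge 1$, I would set $x' = L_m^{(r)}(x)$ and $y' = L_m^{(r)}(y)$; by the inductive hypothesis $x' \mid y'$, and then Proposition \ref{Res1.1} applied to the pair $(x',y')$ yields
\[
L_m^{(r+1)}(x) \;=\; L_m(x') \;\mid\; L_m(y') \;=\; L_m^{(r+1)}(y),
\]
closing the induction.

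The only point that requires care, and probably the only thing that deserves mention in a write-up, is that Proposition \ref{Res1.1} is stated for inputs in $\mathbb{N}$, whereas during the iteration one of $x'$ or $y'$ may equal $0$. This is precisely the reason the author extended the domain of $L_m$ by declaring $L_m(0)=0$ in the introduction. With that convention, together with the standard divisibility conventions that $n \mid 0$ for every $n$ and $0 \mid 0$, each degenerate case is harmless: if $x'=0$, then either $y'=0$ as well (and both sides of the next step are $0$), or the hypothesis is vacuous; if instead $y'=0$ while $x'\neq 0$, then $L_m^{(r+1)}(y)=L_m(0)=0$ is divisible by $L_m^{(r+1)}(x)$ automatically.

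There is no real obstacle here: the result is essentially formal once Proposition \ref{Res1.1} is known, which is itself immediate from the product formula \eqref{2}. The only thing an expositor might want to double-check is that the zero-handling convention is compatible with Proposition \ref{Res1.1} in the single boundary case where the smaller argument becomes $0$, and this is handled by the remarks above. The whole argument can reasonably be presented in one sentence, which is what the author appears to intend.
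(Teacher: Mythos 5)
Your proof is correct and matches the paper's approach exactly: the paper simply iterates Proposition \ref{Res1.1} $r$ times, which is precisely your induction on $r$. Your extra care about the convention $L_m(0)=0$ and the degenerate divisibility cases is a welcome tightening of a detail the paper glosses over, but it does not change the argument.
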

\begin{result} \label{Res1.3}
For $m,n\in \mathbb{N}$, if $m$ is even and $n$ is odd, then either $L_m(n)=0$ or $L_m(n)$ is odd.
\end{result}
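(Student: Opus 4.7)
The plan is to argue directly from the closed-form expression \eqref{2} for $L_m(n)$. Assume $m$ is even, $n$ is odd, and $L_m(n) \neq 0$. Since $L_m(n) \neq 0$, the second case of \eqref{2} applies, so $p(n) > m$ and
\[
L_m(n) = \prod_{i=1}^r p_i^{\alpha_i - 1}(p_i - m),
\]
where $\prod_{i=1}^r p_i^{\alpha_i}$ is the canonical prime factorization of $n$.

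The key observation is then that every factor appearing in this product is odd. First, since $n$ is odd, each prime $p_i$ dividing $n$ is odd, hence $p_i^{\alpha_i - 1}$ is odd. Second, since $p_i$ is odd and $m$ is even, the difference $p_i - m$ is odd. A product of odd integers is odd, so $L_m(n)$ is odd in this case. Combined with the alternative that $L_m(n) = 0$, this gives the dichotomy claimed.

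I do not anticipate any genuine obstacle: the statement is essentially a parity bookkeeping observation about the explicit formula \eqref{2}, and the case $n = 1$ (giving $L_m(1) = 1$, which is odd) is already covered by the convention recorded just after \eqref{1}. The only thing to be careful about is to invoke the correct case of \eqref{2}, which is justified by the nonvanishing hypothesis $L_m(n) \neq 0$.
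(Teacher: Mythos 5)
Your argument is correct and is exactly the parity computation the paper has in mind: the paper states that Proposition \ref{Res1.3} ``follows immediately from \eqref{2},'' and your proof simply spells out that immediate deduction (odd prime powers times odd differences $p_i-m$). The only nitpick is the source you cite for $L_m(1)=1$: it comes from the multiplicativity of $L_m$ noted just before \eqref{1}, not from the convention $L_m(0)=0$.
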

In addition, the following theorem is now quite easy to prove.
\begin{theorem} \label{Thm1.1}
For any prime number $p$ and positive integer $x$,
\[L_{p-1}(px)=\begin{cases} L_{p-1}(x), & \mbox{if } p\nmid x; \\
pL_{p-1}(x), & \mbox{if } p \vert x. \end{cases}\]
\end{theorem}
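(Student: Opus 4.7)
The plan is to split on whether $p$ divides $x$ and use the multiplicativity of $L_{p-1}$ together with the explicit formula \eqref{2} on prime powers. The key observation is that $p$ is the first prime exceeding $p-1$, so $L_{p-1}(p^k) = p^{k-1}(p-(p-1)) = p^{k-1}$ for every $k\in\mathbb{N}$; in particular, $L_{p-1}(p)=1$.

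First I would handle the case $p\nmid x$. Since $\gcd(p,x)=1$, multiplicativity gives $L_{p-1}(px) = L_{p-1}(p)\,L_{p-1}(x) = 1\cdot L_{p-1}(x)$, which is the first branch of the claim. (The edge case $x=1$ falls here and is trivial.)

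Next, for $p\mid x$, I would write $x = p^\beta y$ where $\beta = \upsilon_p(x)\geq 1$ and $\gcd(p,y)=1$, so that $px = p^{\beta+1}y$. Applying multiplicativity twice,
\[
L_{p-1}(px) = L_{p-1}(p^{\beta+1})\,L_{p-1}(y) = p^{\beta}\,L_{p-1}(y),
\]
\[
L_{p-1}(x) = L_{p-1}(p^{\beta})\,L_{p-1}(y) = p^{\beta-1}\,L_{p-1}(y),
\]
and the ratio is exactly $p$, giving the second branch.

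The only subtlety worth mentioning is that if $y$ has a prime factor $\leq p-1$, then $L_{p-1}(y)=0$ by the first case of \eqref{2}, forcing both $L_{p-1}(px)$ and $pL_{p-1}(x)$ to vanish simultaneously; since the identity $L_{p-1}(px)=pL_{p-1}(x)$ then holds trivially, no separate argument is required. So there is no real obstacle — the proof is a direct unpacking of \eqref{2} through the multiplicativity of $L_{p-1}$.
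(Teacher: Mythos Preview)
Your proof is correct and follows essentially the same route as the paper: split on whether $p\mid x$, invoke multiplicativity, and use $L_{p-1}(p^k)=p^{k-1}$ on the $p$-part. Your added remark about the vanishing case $L_{p-1}(y)=0$ is a nice clarification the paper leaves implicit, but otherwise the arguments are the same.
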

\begin{proof}
If $p\nmid x$, it follows from the
 multiplicativity of $L_{p-1}$ that 
 $L_{p-1}(px)=L_{p-1}(p)L_{p-1}
(x)=L_{p-1}(x)$. If $p\vert x$, then we have 
\begin{align*} 
L_{p-1}(px)= L_{p-1}\left(p\cdot p^{\upsilon_p(x)}\cdot \frac{x}{p^{\upsilon_p(x)}}
\right)=L_{p-1}\left(p^{\upsilon_p(x)+1}\right)L_{p-1}\left(\frac{x}
{p^{\upsilon_p(x)}}\right) \\ = p^{\upsilon_p(x)}L_{p-1}\left(\frac{x}
{p^{\upsilon_p(x)}}\right)=pL_{p-1}\left(p^{\upsilon_p(x)}
\right)L_{p-1}\left(\frac{x}{p^{\upsilon_p(x)}}\right)=pL_{p-1}(x).
\end{align*}
\end{proof}
Notice that, for any positive integers $m$ and $n$ with $n>1$, we have $L_m(n)<n$ and $L_m(n)\in \mathbb{N}_0$. It is easy to see that, by starting with a positive integer $n$ and iterating the function $L_m$ a finite number of times, we must eventually reach either $0$ or $1$. More precisely, there exists a positive integer $k$ such that $L_m^{(k)}(n)\in \{0,1\}$. This leads us to the following definitions.
\begin{definition}
For all $m,n\in \mathbb{N}$, let $R_m(n)$ denote the least positive integer $k$ such that $L_m^{(k)}(n)\in \{0,1\}$. Furthermore, we define the function $H_m$ by \[H_m(n)=L_m^{(R_m(n))}(n).\]
\end{definition}
Though the functions $H_m$ only take values $0$ and $1$, they prove to be surprisingly interesting. For example, we can show that, for each positive integer $m$, $H_m$ is a completely multiplicative function. First, however, we will need some definitions and preliminary results.
\begin{definition}
For $m\in \mathbb{N}$, we define the following sets:
\[P_m=\{p\in\mathbb{P}: H_m(p)=1\}\]
\[Q_m=\{q\in\mathbb{P}: H_m(q)=0\}\]
\[S_m=\{n\in\mathbb{N}: q\nmid n\:\forall\: q\in Q_m\}\]
We define $T_m$ to be the unique set of positive integers defined by the following criteria:
\begin{itemize}
\item $1\in T_m$.

\item If $p$ is prime, then $p\in T_m$ if and only if $p-m\in T_m$.

\item If $x$ is composite, then $x\in T_m$ if and only if there exist $x_1,x_2\in T_m$ such that $x_1,x_2>1$ and $x_1x_2=x$.
\end{itemize}
\end{definition}
\begin{lemma} \label{Lem1.1}
Let $k,m\in\mathbb{N}$. If all the prime divisors of $k$ are in $T_m$, then all the positive divisors of $k$ (including $k$) are in $T_m$. Conversely, if $k\in T_m$, then every positive divisor of $k$ is an element of $T_m$.
\end{lemma}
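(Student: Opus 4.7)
The plan is to handle the two implications separately, both by induction on carefully chosen quantities and both exploiting the recursive definition of $T_m$. For the first implication, I would prove the slightly stronger statement that any positive integer $n$ whose prime divisors all lie in $T_m$ is itself in $T_m$, by induction on $\Omega(n)$, the number of prime factors of $n$ counted with multiplicity. The cases $\Omega(n)=0$ (so $n=1$) and $\Omega(n)=1$ (so $n$ prime) are immediate from the first and second bullets of the definition of $T_m$. For $\Omega(n)\geq 2$, I pick any prime $p\mid n$ and apply the induction hypothesis to $n/p$, whose prime divisors form a subset of those of $n$; then the third bullet with $x_1=p$ and $x_2=n/p$ (both greater than $1$ and in $T_m$) yields $n\in T_m$. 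Specializing this to each positive divisor $d$ of $k$---whose prime divisors are a fortiori prime divisors of $k$, hence in $T_m$ by hypothesis---gives the first implication.

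The converse is more delicate. The composite clause in the definition asserts only the \emph{existence} of \emph{some} nontrivial factorization $x=x_1x_2$ with $x_1,x_2\in T_m$, and since $x_1$ and $x_2$ may share prime factors, an arbitrary divisor of $x$ need not split as $d_1d_2$ with $d_i\mid x_i$. To circumvent this, I would first establish the auxiliary claim that every prime divisor of every $k\in T_m$ is itself in $T_m$, by strong induction on $k$. The base $k=1$ is vacuous, $k$ prime is trivial from the second bullet, and if $k\in T_m$ is composite with witnessing factorization $k=x_1x_2$ where $1<x_1,x_2<k$ and $x_1,x_2\in T_m$, then every prime divisor of $k$ divides one of $x_1,x_2$ and so lies in $T_m$ by the induction hypothesis.

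With this claim in hand, the converse follows immediately: for any positive divisor $d$ of $k\in T_m$, every prime divisor of $d$ also divides $k$ and hence lies in $T_m$ by the claim; the first implication then yields $d\in T_m$. The main obstacle is precisely the looseness of the composite clause noted above, which blocks any naive attempt to push divisibility directly through the decomposition $k=x_1x_2$; the auxiliary prime-divisor lemma is the device that converts the existential composite clause into a clean structural statement about prime factorizations of elements of $T_m$.
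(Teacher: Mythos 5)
Your proposal is correct and takes essentially the same route as the paper: the forward direction builds each divisor up from its prime factors by repeated use of the third defining criterion of $T_m$, and the converse uses the composite factorization $k=x_1x_2$ to reduce to prime divisors before invoking the forward direction. The only difference is bookkeeping---you prove an auxiliary prime-divisor claim by strong induction on $k$, while the paper inducts on $\Omega(k)$ directly on the full divisor statement---which does not change the substance of the argument.
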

\begin{proof}
First, suppose that all the prime divisors of $k$ are in $T_m$, and let $d$ be a positive divisor of $k$. Then all the prime divisors of $d$ are in $T_m$. Let $d=\prod_{i=1}^rw_i^{\alpha_i}$ be the canonical prime factorization of $d$. As $w_1\in T_m$, the third defining criterion of $T_m$ tells us that $w_1^2\in T_m$. Then, by the same token, $w_1^3\in T_m$. Eventually, we find that $w_1^{\alpha_1}\in T_m$. As $w_1^{\alpha_1},w_2\in T_m$, we have $w_1^{\alpha_1}w_2\in T_m$. Repeatedly using the third criterion, we can keep multiplying by primes until we find that $d\in T_m$. This completes the first part of the proof. Now we will prove that if $k\in T_m$, then every positive divisor of $k$ is an element of $T_m$. The proof is trivial if $k$ is prime, so suppose $k$ is composite. We will induct on $\Omega(k)$, the number of prime divisors (counting multiplicities) of $k$. If $\Omega(k)=2$, then, by the third defining criterion of $T_m$, the prime divisors of $k$ must be elements of $T_m$. Therefore, if $\Omega(k)=2$, we are done. Now, suppose the result holds whenever $\Omega(k)\leq h$, where $h>1$ is a positive integer. Consider the case in which $\Omega(k)=h+1$. By the third defining criterion of $T_m$, we can write $k=k_1k_2$, where $1<k_1,k_2<k$ and $k_1,k_2\in T_m$. By the induction hypothesis, all of the positive divisors of $k_1$ and all of the positive divisors of $k_2$ are in $T_m$. Therefore, all of the prime divisors of $k$ are in $T_m$. By the first part of the proof, we conclude that all of the positive divisors of $k$ are in $T_m$.
\end{proof}
\begin{theorem} \label{Thm1.2}
If $m$ is a positive integer, then $S_m=T_m$.
\end{theorem}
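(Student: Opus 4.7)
The plan is to reduce $S_m = T_m$ to a statement about primes. Lemma \ref{Lem1.1} shows that $T_m$ is generated multiplicatively by $T_m \cap \mathbb{P}$, and the definition of $S_m$ similarly identifies $S_m$ with the set of positive integers whose prime divisors lie in $P_m = \mathbb{P} \setminus Q_m$. So it suffices to show that for every prime $p$, $p \in T_m$ if and only if $H_m(p) = 1$.

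Before inducting on $p$, I would establish an auxiliary claim: $n \in S_m$ if and only if $H_m(n) = 1$ for every $n \in \mathbb{N}$. One direction is immediate from Proposition \ref{Res1.2}: if some $q \in Q_m$ divides $n$, then with $r = R_m(q)$ we get $0 = H_m(q) = L_m^{(r)}(q) \mid L_m^{(r)}(n)$, forcing $L_m^{(r)}(n) = 0$ and hence $H_m(n) = 0$. The reverse direction goes by strong induction on $n$: if $n \in S_m$ and $n > 1$, then every prime divisor of $n$ exceeds $m$ (any prime $\leq m$ lies in $Q_m$), so $L_m(n)$ is a positive integer strictly less than $n$, and it is enough to check that $L_m(n) \in S_m$ in order to invoke the inductive hypothesis and conclude $H_m(n) = H_m(L_m(n)) = 1$. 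A prime divisor of $L_m(n) = \prod p_i^{\alpha_i - 1}(p_i - m)$ is either some $p_i$, which lies in $P_m$ by assumption, or a prime divisor of some $p_i - m$; in the second case, $H_m(p_i - m) = H_m(L_m(p_i)) = H_m(p_i) = 1$, and the already-established direction of the auxiliary claim applied to $p_i - m$ shows that its prime divisors all lie in $P_m$.

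With the auxiliary claim in hand, I would prove $p \in T_m$ if and only if $H_m(p) = 1$ for primes $p$ by strong induction on $p$. If $p \leq m$, then $p - m \notin \mathbb{N}$, so $p \notin T_m$ by the recursive definition, while $L_m(p) = 0$ gives $H_m(p) = 0$. If $p > m$, then $p \in T_m$ if and only if $p - m \in T_m$, which by Lemma \ref{Lem1.1} is equivalent to every prime divisor $q$ of $p - m$ lying in $T_m$; since each such $q$ is less than $p$, the inductive hypothesis converts this to $q \in P_m$, which is to say $p - m \in S_m$. Finally, the auxiliary claim gives $p - m \in S_m$ if and only if $H_m(p - m) = 1$, and the identity $H_m(p - m) = H_m(L_m(p)) = H_m(p)$ closes the chain.

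The main obstacle is the auxiliary claim, in particular the verification that $S_m$ is closed under $L_m$; the argument avoids circularity because only the direction of the auxiliary claim proved without induction is invoked inside the inductive step.
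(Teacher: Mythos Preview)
Your argument is correct and is organized quite differently from the paper's. The paper runs a single strong induction on $u$, proving simultaneously that $u\in S_m \Leftrightarrow u\in T_m$ and (as an internal lemma) that $k\in S_m \Leftrightarrow L_m(k)\in S_m$ for $k<u$; it then splits into prime/composite cases and handles each direction separately, making for a long intertwined proof. You instead isolate the statement ``$n\in S_m \Leftrightarrow H_m(n)=1$'' (which is exactly the paper's Corollary~\ref{Cor1.1}) and prove it \emph{first}, independently of $T_m$: one direction via Proposition~\ref{Res1.2}, the other by induction using the closure of $S_m$ under $L_m$. With that in hand you reduce $S_m=T_m$ to the prime case via Lemma~\ref{Lem1.1} and the definition of $S_m$, and finish with a short induction on primes. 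Your route is more modular and avoids the paper's case analysis; it also inverts the logical order, deriving Corollary~\ref{Cor1.1} before rather than after Theorem~\ref{Thm1.2}. The only delicate point---that invoking the already-proved direction of the auxiliary claim inside the inductive step is not circular---you flag explicitly, and it is indeed sound.
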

\begin{proof}
Fix $m\!\in\! \mathbb{N}$. Let $u$ be a positive integer such that,
for all $k\in \{1,2,\ldots,u-1\}$, either $k\in S_m$ and $k\in T_m$ or $k\not\in S_m$ and $k\not\in T_m$. We will show that $u\in S_m$ if and only
if $u\in T_m$. First, we must show that if $k\in \{1,2,\ldots,u-1\}$, then $k\in S_m$ if and only if $L_m(k)\in S_m$. Suppose, for the sake of
finding a contradiction, that $L_m(k)\in S_m$ and $k\not\in S_m$. As $k\not\in S_m$, we have that $k>1$ and $k\not\in T_m$. Lemma \ref{Lem1.1} then
guarantees that there exists a prime $q$ such that $q\vert k$ and $q\not\in T_m$. As $q\not\in T_m$, the second defining criterion of $T_m$
implies that $q-m\not\in T_m$. We know that $q>m$ because, otherwise, $p(k)\leq q\leq m$, implying that $L_m(k)=0\not\in S_m$. Therefore, $q-m\in \{1,2,\ldots,u-1\}$ and $q-m\not\in T_m$. By the induction hypothesis, $q-m\not\in S_m$. Therefore, there exists some $q_0\in Q_m$ such that $q_0\vert q-m$. Because $q\vert k$, Proposition \ref{Res1.1} implies that $L_m(q)\vert L_m(k)$. Thus, $q_0\vert q-m=L_m(q)\vert L_m(k)$, which implies
that $L_m(k)\not\in S_m$. This is a contradiction. Now suppose, so that
we may again search for a contradiction, that $L_m(k)\not\in S_m$ and $k\in S_m$. $L_m(k)\not\in S_m$ implies that $k>1$, and $k\in S_m$ implies
(by the induction hypothesis) that $k\in T_m$. By Lemma \ref{Lem1.1}, all positive divisors of $k$ are elements of $T_m$. Let $k=\prod_{i=1}^rp_i^{\alpha_i}$ be the canonical prime factorization of $k$. Then, by \eqref{2},
\[L_m(k)=\begin{cases} 0, & \mbox{if } p(k)\leq m; \\
\displaystyle \prod_{i=1}^rp_i^{\alpha_i-1}(p_i-m), & \mbox{if } p(k)>m. \end{cases}\]
If $p(k)\!\leq\! m$, then $H_m\left(p(k)\right)=0$, which mean that $p(k)\in Q_m$. As $p(k)\vert k$, we have contradicted $k\in S_m$. Therefore, $p(k)>m$, so $L_m(k)=\prod_{i=1}^rp_i^{\alpha_i-1}(p_i-m)$. For each $i\in \{1,2,\ldots,r\}$, $p_i$ is a positive divisor of $k$, so $p_i\in T_m$. The second criterion defining $T_m$ then implies that $p_i-m\in T_m$, so all positive divisors (and, specifically, all prime divisors) of $p_i-m$ are in $T_m$. This implies that all prime divisors of $L_m(k)$ are elements of $T_m$, so Lemma \ref{Lem1.1} guarantees that $L_m(k)\in T_m$. However, we have shown that $0<L_m(k)<k$, so $L_m(k)\in \{1,2,\ldots,u-1\}$. By the induction hypothesis, we have $L_m(k)\in S_m$, a contradiction. Thus, we have established that if $k\in \{1,2,\ldots,u-1\}$, then $k\in S_m$ if and only if $L_m(k)\in S_m$.

We are now ready to establish that $u\in S_m$ if and only if $u\in T_m$.  Suppose that $u\in S_m$ and $u\not\in T_m$. We know that $u>m$ because, otherwise, $L_m\left(p(u)\right)=H_m\left(p(u)\right)=0$, implying that $p(u)\in Q_m$ and contradicting $u\in S_m$. If $u$ is prime, then $u\in S_m$ implies that $u\in P_m$. Then $H_m(u)=H_m(u-m)=L_m^{\left(R_m(u-m)\right)}(u-m)=1\in S_m$. As $L_m^{\left(R_m(u-m)\right)}(u-m)=L_m\left(L_m^{\left(R_m(u-m)-1\right)}(u-m)\right)\in S_m$ (we assume here and in the rest of the proof that $R_m(u-m)$ is large enough so that the notation $L_m^{(\cdot)}$ makes sense as we have defined it, but the argument is valid in any case), it follows from the preceding argument that $L_m^{\left(R_m(u-m)-1\right)}(u-m)=L_m\left(L_m^{\left(R_m(u-m)-2\right)}(u-m)\right)\in S_m$. Continuing this pattern, we eventually find that $L_m(u-m)\in S_m$, so $u-m\in S_m$. By the induction hypothesis, $u-m\in T_m$. However, by the second criterion defining $T_m$, the primality of $u$ then implies that $u\in T_m$, a contradiction. Thus, $u$ must be composite. We assumed that $u\not\in T_m$, so Lemma \ref{Lem1.1} guarantees the existence of a prime $q\not\in T_m$ such that $q\vert u$. As $u$ is composite, $q\in \{1,2,\ldots,u-1\}$. The induction hypothesis then implies that $q\not\in S_m$, so $q\in Q_m$. However, this contradicts $u\in S_m$, so we have shows that if $u\in S_m$, then $u\in T_m$.
Suppose, on the other hand, that $u\not\in S_m$ and $u\in T_m$. Again, we begin by assuming $u$ is prime. Then, because $u\in T_m$, we must have $u-m\in T_m$. Therefore, by the induction hypothesis and the fact that $u-m\in \{1,2,\ldots,u-1\}$, it follows that $u-m\in S_m$. Now, $u\not\in S_m$, so we must have $u\in Q_m$. Therefore, $H_m(u)=H_m\left(L_m(u)\right)=H_m(u-m)=L_m^{\left(R_m(u-m)\right)}(u-m)=0\not\in S_m$. However, as $L_m^{\left(R_m(u-m)\right)}(u-m)=L_m\left(L_m^{\left(R_m(u-m)-1\right)}(u-m)\right)\not\in S_m$, it follows that $L_m^{\left(R_m (u-m)-1\right)}(u-m)=L_m\left(L_m^{\left(R_m(u-m)-2\right)}(u-m)\right)\not\in S_m$. Again, we continue this pattern until we eventually find that $L_m(u-m)\not\in S_m$, which means that $u-m\not\in S_m$. This is a contradiction, and we conclude that $u$ must be composite. From $u\in T_m$ and Lemma \ref{Lem1.1}, we conclude that all of the prime divisors of $u$ are elements of $T_m$. Furthermore, as $u$ is composite, all of the prime divisors of $u$ are elements of $\{1,2,\ldots,u-1\}$. Then, by the induction hypothesis, all of the prime divisors of $u$ are in the set $S_m$. This implies that none of the prime divisors of $u$ are in $Q_m$, so $u\in S_m$. This is a contradiction, and the induction step of the proof is finally complete. All that is left to check is the base  case. However, the base case is trivial because $1\in S_m$ and $1\in T_m$.
\end{proof}

We may now use the sets $S_m$ and $T_m$ interchangeably. In addition, part of the above proof gives rise to the following corollary.
\begin{corollary} \label{Corol}
Let $k,m,n\in \mathbb{N}$. Then $L_m^{(k)}(n)\in S_m$ if and only if $n\in S_m$.
\end{corollary}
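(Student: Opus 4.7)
The plan is to reduce the corollary to the single-step equivalence $n\in S_m\iff L_m(n)\in S_m$ by induction on $k$, and to prove that single-step equivalence by transporting the question from $S_m$ to $T_m$ via Theorem \ref{Thm1.2}. The induction itself is immediate: granted the $k=1$ case for every input, applying it to $L_m^{(k-1)}(n)$ gives $L_m^{(k)}(n)\in S_m\iff L_m^{(k-1)}(n)\in S_m$, and iterating this chain downward to $L_m(n)\in S_m\iff n\in S_m$ produces the corollary. This is exactly the chaining device already deployed twice inside the proof of Theorem \ref{Thm1.2}, so the substantive content lies entirely in the single iteration.

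For that step, I would replace $S_m$ with $T_m$ (justified by Theorem \ref{Thm1.2}) and argue from the defining criteria of $T_m$ together with Lemma \ref{Lem1.1}. The case $n=1$ is trivial because $L_m(1)=1\in T_m$, so assume $n>1$ and write $n=\prod_{i=1}^r p_i^{\alpha_i}$. If $n\in T_m$, Lemma \ref{Lem1.1} places every $p_i$ in $T_m$, and the second defining criterion of $T_m$ forces $p_i-m\in T_m\subseteq\mathbb{N}$, so in particular $p_i>m$. Hence $p(n)>m$, and \eqref{2} gives $L_m(n)=\prod_{i=1}^r p_i^{\alpha_i-1}(p_i-m)$; since each $p_i$ and each $p_i-m$ lies in $T_m$, Lemma \ref{Lem1.1} transfers $T_m$-membership to every prime factor of $L_m(n)$ and therefore to $L_m(n)$ itself.

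Conversely, if $L_m(n)\in T_m$, then $L_m(n)\neq 0$ (because $0\notin T_m$), so \eqref{2} again gives $L_m(n)=\prod_{i=1}^r p_i^{\alpha_i-1}(p_i-m)$. By Lemma \ref{Lem1.1}, every prime factor of this product lies in $T_m$; in particular every prime factor of each $p_i-m$ lies in $T_m$, so Lemma \ref{Lem1.1} yields $p_i-m\in T_m$, and the second defining criterion then gives $p_i\in T_m$. A final application of Lemma \ref{Lem1.1} places $n$ itself in $T_m$. The only delicate point in the whole argument is the bookkeeping that rules out $L_m(n)=0$ in the reverse direction; once that is handled, the corollary falls out of the structural description of $T_m$ with no new ideas needed.
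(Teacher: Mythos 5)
Your proof is correct, but it reaches the key single-step equivalence by a different route than the paper. The paper's proof of this corollary simply observes that the claim ``$k\in S_m$ if and only if $L_m(k)\in S_m$'' was already established, for all $k$ below an arbitrarily large bound $u$, inside the induction in the proof of Theorem \ref{Thm1.2}, and then chains that equivalence along the iterates exactly as you do. You instead re-derive the single-step equivalence from scratch: invoking Theorem \ref{Thm1.2} to replace $S_m$ by $T_m$, you argue both directions directly from the defining criteria of $T_m$, Lemma \ref{Lem1.1}, and formula \eqref{2}, using $p_i-m\in T_m\subseteq\mathbb{N}$ to force $p(n)>m$ in the forward direction and $0\notin T_m$ to rule out $L_m(n)=0$ in the reverse direction. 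This is in essence a direct, non-inductive repackaging of the structural argument embedded in the proof of Theorem \ref{Thm1.2}; it costs some duplication of that work, but it buys a self-contained proof that does not ask the reader to extract an intermediate claim from the middle of another proof, and it avoids any appeal to the auxiliary parameter $u$. One small point, shared with the paper's own write-up: when chaining, an iterate $L_m^{(j)}(n)$ may equal $0$, which lies outside the stated domain of the single-step lemma; this is harmless because then every later iterate is also $0$ and $0\notin S_m$, but a sentence acknowledging it would make the chain airtight.
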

\begin{proof}
The proof follows from the argument in the above proof that $L_m(n)\in S_m$ if and only if $n\in S_m$ whenever $n\in \{1,2,\ldots,u-1\}$. As we now know that we can make $u$ as large as we need, it follows that $L_m(n)\in S_m$ if and only if $n\in S_m$. Then $L_m^{(2)}(n)\in S_m$ if and only if $L_m(n)\in S_m$, $L_m^{(3)}(n)\in S_m$ if and only if $L_m^{(2)}(n)\in S_m$, and, in general, $L_m^{(r+1)}(n)\in S_m$ if and only if $L_m^{(r)}(n)\in S_m$ $(r\in \mathbb{N})$. The desired result follows immediately.
\end{proof}
\begin{corollary} \label{Cor1.1}
Let $m,n\in\mathbb{N}$. Then $H_m(n)\in S_m$ if and only if $n\in S_m$.
\end{corollary}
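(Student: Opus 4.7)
The plan is to observe that this corollary is essentially immediate from Corollary \ref{Corol}. By definition, $H_m(n) = L_m^{(R_m(n))}(n)$, so $H_m(n)$ is simply the value of $L_m^{(k)}(n)$ at the particular positive integer $k = R_m(n)$.

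First I would note that $R_m(n) \in \mathbb{N}$ for every $n \in \mathbb{N}$, so the hypotheses of Corollary \ref{Corol} are met with this choice of $k$. Applying that corollary with $k = R_m(n)$ yields
\[
L_m^{(R_m(n))}(n) \in S_m \iff n \in S_m,
\]
which is exactly the statement $H_m(n) \in S_m \iff n \in S_m$.

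There is no real obstacle: the heavy lifting has already been done in Theorem \ref{Thm1.2} and Corollary \ref{Corol}, where it was shown that membership in $S_m$ is preserved under iteration of $L_m$ in both directions. The only thing to be careful about is making sure $R_m(n)$ is a legitimate value of $k$ to plug in, which is immediate since $R_m$ is defined to take values in $\mathbb{N}$. Thus the proof should consist of a single sentence invoking Corollary \ref{Corol} with $k = R_m(n)$.
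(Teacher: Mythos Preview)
Your proposal is correct and matches the paper's own proof: the paper also simply invokes Corollary~\ref{Corol} with $k=R_m(n)$. The only minor addition in the paper is the preliminary remark that $H_m(n)\in S_m$ if and only if $H_m(n)=1$, which is not strictly needed for the statement as written but clarifies why the result is useful.
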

\begin{proof}
It is clear that $H_m(n)\in S_m$ if and only if $H_m(n)=1$. Therefore, the proof follows immediately from setting $k=R_m(n)$ in Corollary \ref{Corol}.
\end{proof}

Notice that, for a given positive integer $m$, Corollary \ref{Cor1.1}, along with Theorem \ref{Thm1.2} and the defining criteria of $T_m$, provides a simple way to construct the set of all positive integers $x$ that satisfy $H_m(x)=1$. Corollary \ref{Cor1.1} also expedites the proof of the following theorem.
\begin{theorem} \label{Thm1.3}
The function $n\mapsto H_m(n)$ is completely multiplicative for all $m\in\mathbb{N}$. 
\end{theorem}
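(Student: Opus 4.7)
The plan is to leverage Corollary \ref{Cor1.1} to reduce complete multiplicativity of $H_m$ to a transparent statement about divisibility by primes in $Q_m$. Since $H_m$ takes only the values $0$ and $1$, the identity $H_m(ab)=H_m(a)H_m(b)$ is equivalent to the biconditional
\[H_m(ab)=1\quad\Longleftrightarrow\quad H_m(a)=1\text{ and }H_m(b)=1.\]
So the entire theorem reduces to verifying this biconditional for arbitrary $a,b\in\mathbb{N}$.

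First I would apply Corollary \ref{Cor1.1}, which tells us that $H_m(n)=1$ precisely when $n\in S_m$ (and $H_m(n)=0$ precisely when $n\notin S_m$, since $0\notin S_m$ while $1\in S_m$). Thus the biconditional above becomes
\[ab\in S_m\quad\Longleftrightarrow\quad a\in S_m\text{ and }b\in S_m.\]

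Next I would unwind the definition of $S_m$: by construction, $n\in S_m$ if and only if no prime $q\in Q_m$ divides $n$. From this characterization, the equivalence is immediate. Indeed, some $q\in Q_m$ divides $ab$ if and only if $q$ divides $a$ or $q$ divides $b$ (since $q$ is prime), so $ab\notin S_m$ exactly when at least one of $a,b$ fails to lie in $S_m$. Contrapositively, $ab\in S_m$ exactly when both $a$ and $b$ lie in $S_m$.

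Putting the pieces together: if $a,b\in S_m$, then $ab\in S_m$, so $H_m(ab)=1=H_m(a)H_m(b)$; otherwise at least one of $H_m(a),H_m(b)$ is $0$, and likewise $H_m(ab)=0$, so $H_m(ab)=0=H_m(a)H_m(b)$. There is no genuine obstacle once Corollary \ref{Cor1.1} is in hand; the only subtle point is noticing that the proof does not require $a$ and $b$ to be coprime, which is why we get complete (and not merely) multiplicativity.
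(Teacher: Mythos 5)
Your proposal is correct and follows essentially the same route as the paper: invoke Corollary \ref{Cor1.1} to translate $H_m(n)=1$ into $n\in S_m$, then use the definition of $S_m$ together with the fact that a prime in $Q_m$ divides $xy$ if and only if it divides $x$ or $y$. The only cosmetic difference is that you phrase the argument as a single biconditional while the paper splits it into the cases $H_m(x)=0$, $H_m(y)=0$, and $H_m(x)=H_m(y)=1$.
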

\begin{proof}
Choose some $m,x,y\in\mathbb{N}$. First, suppose $H_m(x)=0$. By Corollary \ref{Cor1.1}, $x\not\in S_m$. Therefore, there exists $q\in Q_m$ such that $q\vert x$. This implies that $q\vert xy$, so $xy\not\in S_m$. Thus, $H_m(xy)=0$. A similar argument shows that $H_m(xy)=0$ if $H_m(y)=0$. Now, suppose that $H_m(x)=H_m(y)=1$. Then Corollary \ref{Cor1.1} ensures that $x,y\in S_m$. Therefore, $xy\in S_m$, so $H_m(xy)=1$. As the function $H_m$ can only take values $0$ and $1$, the proof is complete.
\end{proof}

In concluding this section, we note that if $m+1$ is composite, then it is impossible for any integer greater than $1$ to be in $S_m$. Therefore, whenever $m+1$ is composite, we have $H_m(1)=1$ and $H_m(n)=0$ for all integers $n>1$.
\section{Summing the Iterates}
A perfect totient number is defined \cite{iannucci03} to be a positive integer $n>1$ that satisfies (using our previous notation) \[n=\sum_{i=1}^{R_1(n)}\phi^{(i)}(n).\]
In the following definitions, we generalize the concept of perfect totient numbers. We also borrow some other traditional terminology related to perfect numbers.
\begin{definition}
Let $m$ be a positive integer. We define the arithmetic function $D_m$ by $D_m(1)=0$ and \[D_m(n)=\sum_{i=1}^{R_m(n)}L_m^{(i)}(n)\] for all integers $n>1$. If $D_m (n)<n$, we say that $n$ is $D_m$\textit{-deficient}. If $D_m(n)=n$, we say that $n$ is $D_m$\textit{-perfect}. If $D_m(n)>n$, we say that $n$ is $D_m$\textit{-abundant}. Finally, in the case when $D_m(n)=0$, we say that $n$ is $D_m$\textit{-stagnant}.
\end{definition}

We now present a series of theorems related to these definitions.
\begin{theorem} \label{Thm2.1}
If $m>1$ is odd, then all positive integers are $D_m$-deficient.
\end{theorem}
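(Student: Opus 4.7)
The plan is to show that, for $m$ odd and $m>1$, the iteration trajectory of $L_m$ starting at any $n>1$ reaches $0$ in at most two steps, and that the sum of the terms in the trajectory is strictly less than $n$. The case $n=1$ is immediate since $D_m(1)=0<1$, so I will focus on $n>1$. Note that $m$ odd and $m>1$ forces $m\geq 3$.

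The first step is to observe, directly from \eqref{1} or \eqref{2}, that $L_m(n)=0$ whenever $p(n)\leq m$. In particular, since $m\geq 3$, any even integer $n$ satisfies $p(n)=2\leq m$, so $L_m(n)=0$. Likewise, any odd $n>1$ whose smallest prime divisor is $\leq m$ already maps to $0$. In either of these cases, $R_m(n)=1$ and $D_m(n)=L_m(n)=0<n$, giving $D_m$-deficiency.

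The only remaining case is when $n>1$ is odd with $p(n)>m$. Here formula \eqref{2} gives
\[
L_m(n)=\prod_{i=1}^r p_i^{\alpha_i-1}(p_i-m).
\]
The key parity observation is that, because each $p_i>m\geq 3$ is odd and $m$ is odd, the factor $p_i-m$ is even for every $i$, so $L_m(n)$ is a positive even integer. Applying the previous paragraph with $L_m(n)$ in place of $n$, we get $L_m^{(2)}(n)=0$, so $R_m(n)=2$ and
\[
D_m(n)=L_m(n)+L_m^{(2)}(n)=L_m(n)=n\prod_{p\mid n}\!\left(1-\frac{m}{p}\right)<n,
\]
since each factor in the product is strictly less than $1$. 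This proves $D_m$-deficiency in the final case.

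There is not really a hard step here; the only place where one has to do something is noticing the parity of $p_i-m$, which is what forces the second iterate to vanish and makes the whole argument collapse into a two-step calculation. The rest is just reading off \eqref{2}.
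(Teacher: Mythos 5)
Your proof is correct and follows essentially the same route as the paper: dispose of $n=1$ and $p(n)\leq m$ as stagnant cases, then for $p(n)>m$ use the parity of $p-m$ (with $m$ odd) to show $L_m(n)$ is even, forcing $L_m^{(2)}(n)=0$ and $D_m(n)=L_m(n)<n$. The only cosmetic difference is that the paper deduces $2\mid L_m(n)$ via the divisibility $p(n)-m\mid L_m(n)$ from Proposition \ref{Res1.1}, while you read it off directly from formula \eqref{2}.
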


\begin{proof}
Let $m>1$ be an odd integer, and let $n$ be any positive integer. If $n=1$ or $p(n)\leq m$, then $n$ is $D_m$-stagnant. \textit{A fortiori}, $n$ is $D_m$-deficient. If $p(n)>m$, then $p(n)-m$  is even and $p(n)-m|L_m (n)$ (by Proposition \ref{Res1.1}). Thus, $2\vert L_m(n)$, which implies that $L_m^{(2)}(n)=0$. Hence, $D_m(n)=L_m(n)<n$.

\end{proof}
\begin{theorem} \label{Thm2.2}
All positive even integers are $D_m$-deficient for all positive integers $m$.
\end{theorem}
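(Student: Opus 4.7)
The argument splits naturally into two cases based on the size of $m$.

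For $m \geq 2$, the statement is essentially immediate: since $n$ is even, $p(n) = 2 \leq m$, so formula~\eqref{1} yields $L_m(n) = 0$. Thus $R_m(n) = 1$ and $D_m(n) = 0 < n$, so $n$ is $D_m$-deficient (indeed, $D_m$-stagnant).

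For $m = 1$, where $L_1 = \phi$, genuine iteration is required. I would rely on two standard observations about the Euler $\phi$ function: (a) for every even integer $k \geq 2$ one has $\phi(k) \leq k/2$, which follows from the product formula $\phi(k) = k\prod_{p \mid k}(1 - 1/p)$ applied with $p = 2$; and (b) for every integer $k \geq 3$, $\phi(k)$ is even (if $k$ has an odd prime factor $p$ then $p - 1$ contributes a factor of $2$, while if $k = 2^a$ with $a \geq 2$ then $\phi(k) = 2^{a-1}$ is even).

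Setting $R = R_1(n)$ and $a_i = \phi^{(i)}(n)$, the plan is to prove inductively that $a_i \leq n/2^i$ for $0 \leq i \leq R$. Since $a_{R-1} \geq 2$ and $\phi(a_{R-1}) = 1$ forces $a_{R-1} = 2$, while $a_i \geq 3$ for $0 \leq i \leq R - 2$, observation (b) guarantees that $a_1, \ldots, a_{R-1}$ are all even; together with the fact that $a_0 = n$ is even by hypothesis, observation (a) then gives $a_{i+1} \leq a_i/2$ for each $0 \leq i \leq R-1$, which inductively yields the desired bound. Summing,
\[
D_1(n) = \sum_{i=1}^{R} a_i \leq \sum_{i=1}^{R} \frac{n}{2^i} = n\bigl(1 - 2^{-R}\bigr) < n,
\]
so $n$ is $D_1$-deficient. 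The only delicate step is verifying that every iterate $a_i$ with $i \leq R - 1$ is even, so that the halving bound in (a) applies uniformly; once this is established, the geometric decay instantly gives strict inequality from the finite sum.
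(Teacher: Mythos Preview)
Your proposal is correct and follows essentially the same approach as the paper: for $m\ge 2$ the even hypothesis makes $n$ $D_m$-stagnant, and for $m=1$ the key fact that all iterates $\phi^{(i)}(n)$ with $1\le i\le R_1(n)-1$ are even (together with the even hypothesis on $n$ itself) gives the halving bound $\phi^{(i)}(n)\le n/2^i$ and hence the geometric-series estimate $D_1(n)<n$. The paper compresses this into two sentences by citing that ``all totient numbers greater than $1$ are even,'' but the substance is identical to your observations (a) and (b).
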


\begin{proof}
The proof is trivial for $m>1$ because, in that case, any positive even integer is clearly $D_m$-stagnant. For $m=1$, we use the fact that all totient numbers greater than $1$ are even. Therefore, \[D_1(n)=\phi(n)+\phi^{(2)}(n)+\cdots+\phi^{\left(R_1(n)\right)}(n)\leq \frac{1}{2}n+\frac{1}{4}n+\cdots+\frac{1}{2^{R_1(n)}}n<n.\]
\end{proof}
Theorem \ref{Thm2.2} is nothing revolutionary, but we include it because it fits nicely with the next theorems.

For the next two theorems, which are not quite as trivial as the previous two, we require the following lemma.
\begin{lemma} \label{Lem2.1}
If $k>1$ is an odd integer, then at least one element of the set $\{k,L_2(k),L_2^{(2)}(k)\}$ is divisible by $3$.
\end{lemma}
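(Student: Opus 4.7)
The plan is to split into cases based on the residues modulo $3$ of the prime divisors of $k$. The engine of the argument is a one-line consequence of Proposition~\ref{Res1.1}: for any odd prime $p \mid k$, we have $p-2 = L_2(p) \mid L_2(k)$. So as soon as $k$ has a prime divisor $p \equiv 2 \pmod{3}$, we get $3 \mid p-2 \mid L_2(k)$, and of course if $3 \mid k$ itself we are done immediately.

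This reduces the lemma to the case where $3 \nmid k$ and every prime divisor of $k$ is $\equiv 1 \pmod{3}$. My plan in this case is to apply the same engine one level deeper, to $k' = L_2(k)$. First I would verify that $k'$ is a legitimate target: it is odd by Proposition~\ref{Res1.3} (applied to the odd $k$ with $p(k) \geq 7 > 2$), and strictly greater than $1$ since the smallest prime factor of $k$ is at least $7$, so $k' \geq 7-2 = 5$. Next I would produce a prime divisor $r$ of $k'$ with $r \equiv 2 \pmod{3}$: pick any prime $p \mid k$; since $p \equiv 1 \pmod{3}$, the integer $p-2$ is odd, coprime to $3$, and itself $\equiv 2 \pmod{3}$, so $p-2 > 1$ cannot have all its prime factors $\equiv 1 \pmod{3}$ and must contain some prime factor $r \equiv 2 \pmod{3}$. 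Proposition~\ref{Res1.1} then gives $r \mid p-2 \mid L_2(k) = k'$.

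Finally I would feed this $r$ back into the opening observation with $k'$ in place of $k$: since $r \mid k'$ and $r \equiv 2 \pmod{3}$, Proposition~\ref{Res1.1} applied to the prime $r$ yields $3 \mid r-2 = L_2(r) \mid L_2(k') = L_2^{(2)}(k)$, which finishes the proof.

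The argument is essentially pure casework and I do not anticipate any real obstacle. The only subtlety worth flagging is ensuring that $k'$ is indeed a positive odd integer greater than $1$ before applying the $L_2$-engine to it a second time, but this is immediate from the remaining-case hypothesis that every prime of $k$ is $\equiv 1 \pmod{3}$ and hence at least $7$.
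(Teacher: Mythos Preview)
Your argument is correct and is essentially the same as the paper's: both assume $3\nmid k$ and $3\nmid L_2(k)$, note that every prime $p\mid k$ must then satisfy $p\equiv 1\pmod 3$, pick a prime factor $r\equiv 2\pmod 3$ of $p-2$, and conclude $3\mid r-2\mid L_2^{(2)}(k)$ via Propositions~\ref{Res1.1}--\ref{Res1.2}. The only cosmetic difference is that you route the final divisibility through $r\mid L_2(k)$ while the paper routes it through $L_2^{(2)}(p)\mid L_2^{(2)}(k)$; your extra care in checking $L_2(k)>1$ is a nice touch but not strictly needed for the divisibility chain.
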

\begin{proof}
Let $k>1$ be an odd integer with prime factor $p$, and suppose $3\nmid k$ and $3\nmid L_2(k)$. We know that $p\not\equiv 2$ (mod $3$) because $p-2\vert L_2(k)$, so $p\equiv 1$ (mod $3$). As $p-2\equiv 2$ (mod $3$), $p-2$ must have some prime factor $p'$ such that $p'\equiv 2$ (mod $3$). But then, using Proposition \ref{Res1.2}, $3\vert p'-2=L_2(p')\vert L_2(p-2)=L_2^{(2)}(p)\vert L_2^{(2)}(k)$.
\end{proof}
\begin{theorem} \label{Thm2.3}
For any integer $m>1$, all positive multiples of $3$ are $D_m$-deficient.
\end{theorem}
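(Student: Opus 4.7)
My plan is to reduce the theorem to the single difficult case $m=2$ with $n$ an odd multiple of $3$, and then attack that case by strong induction, using the argument underlying Lemma~\ref{Lem2.1}.

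I would first dispatch three easy cases. If $m>1$ is odd, Theorem~\ref{Thm2.1} immediately gives that every positive integer is $D_m$-deficient. If $m$ is even and $n$ is even, Theorem~\ref{Thm2.2} applies. If $m$ is even with $m\geq 4$ and $n$ is odd, then $p(n)=3\leq m$, so by~\eqref{1} we have $L_m(n)=0$ and hence $D_m(n)=0<n$. This leaves the case $m=2$ with $n$ an odd multiple of $3$.

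For this remaining case, I would argue by strong induction on $n$. The base case $n=3$ gives $D_2(3)=L_2(3)=1<3$. In the inductive step, I would take $n>3$ an odd multiple of $3$, assume the claim for all smaller odd multiples of $3$, and use $3\mid n$ with~\eqref{1} to get $L_2(n)\leq n/3$. Two sub-cases are straightforward: if $L_2(n)=1$ then $D_2(n)=1<n$; if $L_2(n)\geq 3$ and $3\mid L_2(n)$, then $L_2(n)$ is a smaller odd multiple of $3$, the inductive hypothesis gives $D_2(L_2(n))<L_2(n)$, and so
\[D_2(n)=L_2(n)+D_2(L_2(n))<2L_2(n)\leq 2n/3<n.\]

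The main obstacle is the remaining sub-case $L_2(n)\geq 3$ with $3\nmid L_2(n)$. Here I would write $n=3^{\alpha}m$ with $\gcd(m,3)=1$; the factorization $L_2(n)=3^{\alpha-1}L_2(m)$ combined with $3\nmid L_2(n)$ forces $\alpha=1$ and requires every prime divisor $p$ of $m$ to satisfy $p\equiv 1\pmod 3$. In particular $m>1$, so $L_2(n)=L_2(m)<m=n/3$ strictly. Now the argument from the proof of Lemma~\ref{Lem2.1} applies to $L_2(n)=L_2(m)$: each factor $p-2$ appearing in $L_2(m)$ is $\equiv 2\pmod 3$ and consequently contains a prime divisor $q\equiv 2\pmod 3$, so $L_2(n)$ has such a prime $q$, whence $q-2$ is a multiple of $3$ dividing $L_2^{(2)}(n)$ (using Proposition~\ref{Res1.1}). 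Hence $L_2^{(2)}(n)$ is an odd multiple of $3$ smaller than $n$, the inductive hypothesis yields $D_2(L_2^{(2)}(n))<L_2^{(2)}(n)$, and using $L_2^{(2)}(n)<L_2(n)$ (iterates strictly decrease),
\[D_2(n)=L_2(n)+L_2^{(2)}(n)+D_2(L_2^{(2)}(n))<L_2(n)+2L_2^{(2)}(n)<3L_2(n)<n,\]
completing the induction.
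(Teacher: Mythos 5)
Your proof is correct and takes essentially the same route as the paper: after the same easy reductions to $m=2$, you run a strong induction (the paper's minimal-counterexample argument in different clothing) on odd multiples of $3$, split according to whether $3\mid L_2(n)$ or $3\mid L_2^{(2)}(n)$, and close with the same inequalities $D_2(n)<2L_2(n)\le \frac{2}{3}n$ and $D_2(n)<L_2(n)+2L_2^{(2)}(n)<3L_2(n)<n$. The only cosmetic difference is that where the paper treats $n=3^{\alpha}$ separately and applies Lemma \ref{Lem2.1} to a prime divisor $p\neq 2,3$ of $n$, you re-derive the lemma's mechanism inline (all prime factors of $n/3$ being $\equiv 1 \pmod 3$ forces a prime $q\equiv 2\pmod 3$ dividing $L_2(n)$), which is equally valid.
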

\begin{proof}
If $m\geq 3$, then any positive multiple of $3$ is clearly $D_m$-stagnant. Therefore, we only need to check the case $m=2$. Write $K=\{n\in \mathbb{N}:3\vert n, D_2(n)\geq n\}$. Suppose $K\not=\emptyset$ and let $n_0$ be the smallest element of $K$. If $n_0=3^{\alpha}$ for some $\alpha\in\mathbb{N}$, then $D_2(n_0)=3^{\alpha-1}+3^{\alpha-2}+\cdots+3+1=\displaystyle{\frac{n_0-1}{2}}<n_0$. Therefore, $n_0$ must have some prime divisor $p\not=3$. From Theorem \ref{Thm2.1}, $p\not=2$. Also, by Proposition \ref{Res1.2}, $L_2(p)\vert L_2(n_0)$ and $L_2^{(2)}(p)\vert L_2^{(2)}(n_0)$. By Lemma \ref{Lem2.1}, at least one of $L_2(p)$ and $L_2^{(2)}(p)$ must be divisible by $3$. Suppose $3\vert L_2(p)$ so that $3\vert L_2(n_0)$. By the choice of $n_0$ as the smallest element of $K$, we have $D_2\left(L_2(n_0)\right)<L_2(n_0)$. This implies that $D_2(n_0)=L_2(n_0)+D_2\left(L_2(n_0)\right)<2L_2(n_0)<\displaystyle{\frac{2}{3}}n_0$ because $3\vert n_0$. From this contradiction, we conclude that $3\vert L_2^{(2)}(p)$, so $3\vert L_2^{(2)}(n_0)$. Again, by the choice of $n_0$, we have
$D_2\left(L_2^{(2)}(n_0)\right)<L_2^{(2)}(n_0)$. However, this implies that $D_2(n_0)=L_2(n_0)+L_2^{(2)}(n_0)+D_2\left(L_2^{(2)}(n_0 )\right)<L_2(n_0)+2L_2^{(2)}(n_0)<3L_2(n_0)<n_0$, which is a contradiction. It follows that $K$ is empty.
\end{proof}
\begin{theorem} \label{Thm2.4}
If $m>1$ is a positive integer and $m\not=4$, then all positive multiples of $5$ are $D_m$-deficient.
\end{theorem}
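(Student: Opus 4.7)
The plan is to dispatch the cases $m\geq 5$ and $m=3$ immediately and then concentrate on $m=2$, which is the substantive case. For $m\geq 5$, any multiple $n$ of $5$ has $p(n)\leq 5\leq m$, so \eqref{1} gives $L_m(n)=0$ and $n$ is $D_m$-stagnant. For $m=3$, Theorem \ref{Thm2.1} already gives that every positive integer is $D_3$-deficient. The case $m=2$ I would attack by a minimal-counterexample argument in the style of Theorem \ref{Thm2.3}: let $K=\{n\in\mathbb{N}:5\mid n,\ D_2(n)\geq n\}$, suppose $K\neq\emptyset$, and let $n_0=\min K$. Then $n_0$ cannot be even (else $L_2(n_0)=0$), nor a multiple of $3$ (by Theorem \ref{Thm2.3}), nor a pure power of $5$: the iterates $L_2^{(i)}(5^a)=3\cdot 5^{a-i}$ for $1\leq i\leq a$ together with $L_2^{(a+1)}(5^a)=1$ yield $D_2(5^a)=(3\cdot 5^a+1)/4<5^a$. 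So I can write $n_0=5^aM$ with $a\geq 1$, $\gcd(M,30)=1$, and $M\geq 7$, whence $p(M)\geq 7$ and $L_2(M)<M$.

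The goal is then the chain
\[D_2(n_0)=L_2(n_0)+L_2^{(2)}(n_0)+D_2(L_2^{(2)}(n_0))<L_2(n_0)+2L_2^{(2)}(n_0)\leq \tfrac{5}{3}L_2(n_0)=n_0\cdot\frac{L_2(M)}{M}<n_0,\]
which contradicts $n_0\in K$. The rightmost equality uses $L_2(n_0)/n_0=(3/5)(L_2(M)/M)$; the second inequality uses $L_2^{(2)}(n_0)\leq L_2(n_0)/3$, which follows from $3\mid L_2(n_0)$ (a consequence of $L_2(5)=3$ and Proposition \ref{Res1.1}) together with the product formula for $L_2$; and the first inequality is Theorem \ref{Thm2.3} applied to the positive multiple of $3$ $L_2^{(2)}(n_0)$.

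The main obstacle is verifying that $L_2^{(2)}(n_0)$ really is a multiple of $3$. Writing $L_2(M)=3^{\alpha}5^{\beta}W$ with $\gcd(W,15)=1$, the multiplicativity of $L_2$ gives $L_2(n_0)=3^{1+\alpha}5^{a-1+\beta}W$ and then $L_2^{(2)}(n_0)=3^{\alpha}\cdot L_2(5^{a-1+\beta})\cdot L_2(W)$. In every case except the degenerate $a=1$, $\alpha=\beta=0$, this already exhibits a factor of $3$ (either from $3^{\alpha}$ when $\alpha\geq 1$, or from $L_2(5^{a-1+\beta})=3\cdot 5^{a-2+\beta}$ when $a-1+\beta\geq 1$). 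In the degenerate case $L_2^{(2)}(n_0)=L_2^{(2)}(M)$; since $3\nmid M$ and $3\nmid L_2(M)$, Lemma \ref{Lem2.1} applied to $M$ forces $3\mid L_2^{(2)}(M)$, closing the argument and giving $K=\emptyset$.
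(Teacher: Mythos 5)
Your argument is correct, and it is built from the same core ingredients as the paper's proof: the decomposition $D_2(n_0)=L_2(n_0)+L_2^{(2)}(n_0)+D_2\bigl(L_2^{(2)}(n_0)\bigr)$, the bound $D_2\bigl(L_2^{(2)}(n_0)\bigr)<L_2^{(2)}(n_0)$ supplied by Theorem \ref{Thm2.3}, and Lemma \ref{Lem2.1} to force a factor of $3$ into $L_2^{(2)}(n_0)$ in the one subcase where it is not visible from the product formula. Where you genuinely differ is in the packaging: the paper splits into the cases $\alpha\geq 2$ and $\alpha=1$ (the latter further split according to whether $3\mid L_2(k)$, and invoking Theorem \ref{Thm1.1}), ending with explicit numerical bounds such as $\frac{21}{25}n$ and $n-14$, whereas you factor $n_0=5^aM$ with $\gcd(M,30)=1$ once and for all and close with the single uniform estimate $D_2(n_0)<\frac{5}{3}L_2(n_0)=5^aL_2(M)<n_0$, needing no case distinction on $a$ and no appeal to Theorem \ref{Thm1.1}; your degenerate case $a=1$, $\alpha=\beta=0$ is essentially the paper's subcase $\alpha=1$, $3\nmid L_2(k)$, handled the same way via Lemma \ref{Lem2.1}. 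Two small remarks: the minimal-counterexample wrapper is idle, since minimality of $n_0$ is never used (Theorem \ref{Thm2.3} already gives deficiency of $L_2^{(2)}(n_0)$ outright), so you could simply take an arbitrary odd multiple of $5$ coprime to $3$ that is not a power of $5$ and conclude directly; and it is worth stating explicitly that $L_2(n_0)$ is odd and greater than $1$ (Proposition \ref{Res1.3} plus $L_2(n_0)=3\cdot 5^{a-1}L_2(M)$), so that $L_2^{(2)}(n_0)$ is a \emph{positive} multiple of $3$, which is what both the application of Theorem \ref{Thm2.3} and the step $L_2^{(2)}(n_0)\leq\frac{1}{3}L_2(n_0)$ quietly rely on.
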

\begin{proof}
Let $m$ be a positive integer other than $1$ or $4$, and let $n$ be a multiple of $5$. If $m\geq 5$, then $n$ is $D_m$-stagnant. If $m=3$, then $n$ is $D_m$-deficient by Theorem \ref{Thm2.1}. We therefore only need to check the case $m=2$. Write $n=5^{\alpha}k$, where $\alpha,k\in \mathbb{N}$. We may assume that $2\nmid k$ and $3\nmid k$ because, otherwise, the desired result follows immediately from either Theorem \ref{Thm2.2} or Theorem \ref{Thm2.3}. We now consider two cases.

Case 1: $\alpha\geq 2$. Write $L_2(k)=3^{\alpha_1}5^{\alpha_2}t$, where $t$ is a positive integer not divisible by $2$, $3$, or $5$ and $\alpha_1,\alpha_2\in \mathbb{N}_0$ (we use Proposition \ref{Res1.3} to conclude that $t$ is odd). Then $L_2(n)=L_2(5^{\alpha})L_2(k)=3^{\alpha_1+1}5^{\alpha+\alpha_2-1}t$
and $L_2^{(2)}(n)=L_2(3^{\alpha_1+1})L_2(5^{\alpha+\alpha_2-1})L_2(t)=3^{\alpha_1+1}5^{\alpha+\alpha_2-2}L_2(t)$. As $3\vert L_2(n)$,
we can use Theorem \ref{Thm2.3} to write $D_2(n)=L_2(n)+L_2^{(2)}(n)+D_2\left(L_2^{(2)}(n)\right)<L_2(n)+2L_2^{(2)}(n)=3^{\alpha_1+1}5^{\alpha+\alpha_2-1}t+2\left(3^{\alpha_1+1}5^{\alpha+\alpha_2-2}L_2(t)\right)\leq 7\left(3^{\alpha_1+1}5^{\alpha+\alpha_2-2}t\right)\leq \displaystyle{\frac{21}{25}5^{\alpha}k=\frac{21}{25}n}$. This completes the proof of the case when $\alpha\geq 2$.

Case 2: $\alpha=1$. In this case, $n=5k$, so $L_2(n)=3L_2(k)$. We may assume that $k>1$ because the case $n=5$ is trivial. First, suppose that $3\vert L_2(k)$. In this case, $L_2^{(2)}(k)\leq \displaystyle{\frac{1}{3}L_2(k)}$, and, by Theorem \ref{Thm1.1}, $L_2^{(2)}(n)=3L_2^{(2)}(k)$. Then, using Theorem \ref{Thm2.3}, we have $D_2(n)=L_2(n)+L_2^{(2)}(n)+D_2\left(L_2^{(2)}(n)\right)=3L_2(k)+3L_2^{(2)}(k)+D_2\left(3L_2^{(2) }(k)\right)<3L_2(k)+6L_2^{(2)}(k)\leq 5L_2(k)\leq n-10$. Now suppose that $3\nmid L_2(k)$. By Lemma \ref{Lem2.1} and our assumption that $3\nmid k$, we have $3\vert L_2^{(2)}(k)$. Using Theorem \ref{Thm1.1} and Theorem \ref{Thm2.3} again, we have $D_2(n)=L_2(n)+L_2^{(2)}(n)+D_2\left(L_2^{(2)}(n) \right)=3L_2(k)+L_2^{(2)}(k)+D_2\left(L_2^{(2)}(k)\right)<3L_2(k)+2L_2^{(2)}(k)\leq 3(k-2)+2(k-4)=n-14$. This completes the proof of all cases.
\end{proof}

The last few theorems have dealt with $D_m$-deficient numbers, so it is natural to ask questions about $D_m$-abundant numbers. We might wish to know the positive integers $m$ for which $D_m$-abundant numbers even exist. How many $D_m$-abundant numbers exist for a given $m$? How large can we make $D_m(n)-n$? Theorem \ref{Thm2.1} deals with these questions for the cases when $m$ is odd and greater than $1$. Also, a great deal of literature \cite{iannucci03,luca06} already exists concerning the case $m=1$. In the following theorem, we answer all of the preceding questions for the cases when $m$ is a positive even integer.
\begin{theorem} \label{Thm2.5}
Let $m$ be a positive even integer. For any positive $A$ and $\delta$, there exist infinitely many primes $p_0$ such that $L_m(p_0^{\alpha})+L_m^{(2)}(p_0^{\alpha})>p_0^{\alpha}+Ap_0^{\alpha-\delta}$ for all positive integers $\alpha$.
\end{theorem}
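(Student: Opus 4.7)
I may assume $\delta \le 1$, as the inequality becomes weaker when $\delta$ grows (the term $Ap_0^{\alpha-\delta}$ shrinks), so primes satisfying the case $\delta \le 1$ automatically satisfy every larger case. The plan is to invoke Dirichlet's theorem on primes in arithmetic progressions to produce infinitely many primes $p_0$ for which every prime factor of $p_0 - m$ is large, so that $L_m(p_0 - m)/(p_0 - m)$ cannot decay faster than $p_0^{-\delta/2}$, which is exactly what the target inequality requires.

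First I would compute the iterates explicitly. For a prime $p_0 > m$ such that every prime factor of $p_0 - m$ exceeds $m$, formula \eqref{2} and the multiplicativity of $L_m$ yield $L_m(p_0^\alpha) = p_0^{\alpha-1}(p_0 - m)$ for all $\alpha \ge 1$, $L_m^{(2)}(p_0^\alpha) = p_0^{\alpha-2}(p_0 - m)L_m(p_0 - m)$ for $\alpha \ge 2$, and $L_m^{(2)}(p_0) = L_m(p_0 - m)$. Dividing the $\alpha \ge 2$ inequality by $p_0^{\alpha-2}$ (and handling $\alpha = 1$ separately) collapses the requirement to the two $\alpha$-free conditions
\[(p_0 - m)\,L_m(p_0 - m) > mp_0 + Ap_0^{2-\delta} \qquad \text{and} \qquad L_m(p_0 - m) > m + Ap_0^{1-\delta}.\]

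To generate the primes, I fix an integer $N$ with $N > 2m$ and $N\log N > 4m/\delta$ and set $Q = \prod_{q \le N} q$. For each prime $q \le N$ I impose both $p_0 \not\equiv 0 \pmod q$ (so $\gcd(p_0, Q) = 1$) and $p_0 \not\equiv m \pmod q$ (so $q \nmid p_0 - m$); each condition excludes at most one residue, and when $q = 2$ the two conditions coincide because $m$ is even, so at least one admissible class survives modulo every $q \le N$. The Chinese remainder theorem produces $a \pmod Q$ with $\gcd(a, Q) = \gcd(a - m, Q) = 1$, and Dirichlet's theorem furnishes infinitely many primes $p_0 \equiv a \pmod Q$; for each such $p_0 > Q$ we have $p_0 > m$ and $p_0 - m$ is free of every prime $\le N$.

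The technical heart is a lower bound on $\rho := L_m(p_0 - m)/(p_0 - m) = \prod_{q\mid p_0-m}(1-m/q)$. Since each prime $q \mid p_0-m$ satisfies $q > N > 2m$, the elementary bound $\log(1 - m/q) \ge -2m/q$ holds, and $\omega(p_0 - m) \le \log(p_0 - m)/\log N$ because each prime factor exceeds $N$; summing yields
\[\log \rho \ge -\frac{2m \log(p_0 - m)}{N \log N} \ge -\frac{\delta}{2}\log(p_0 - m),\]
so $L_m(p_0 - m) \ge (p_0 - m)^{1 - \delta/2}$. Substituting this into the two reduced conditions gives $(p_0 - m)^{2-\delta/2} > mp_0 + Ap_0^{2-\delta}$ and $(p_0 - m)^{1-\delta/2} > m + Ap_0^{1-\delta}$, each of which holds for all sufficiently large $p_0$ because the gap $\delta - \delta/2 = \delta/2$ between the exponents forces $p_0^{\delta/2}/A \to \infty$. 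Thus the inequality in the theorem holds for every $\alpha$ and every sufficiently large prime $p_0$ in the Dirichlet progression, giving infinitely many. The one subtlety is the calibration of $N$ so that the crude bound on $\omega(p_0 - m)$ still beats $p_0^{-\delta}$; this is precisely the point where the evenness of $m$ enters, since otherwise the $q = 2$ congruence would be infeasible.
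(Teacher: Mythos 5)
Your proposal is correct, and its skeleton matches the paper's: both arguments use the Chinese remainder theorem together with Dirichlet's theorem to produce infinitely many primes $p_0$ such that $p_0$ and $p_0-m$ avoid a prescribed set of small primes (the evenness of $m$ enters exactly where you say, at the modulus $2$), both compute $L_m(p_0^{\alpha})=p_0^{\alpha-1}(p_0-m)$ and $L_m^{(2)}(p_0^{\alpha})=p_0^{\alpha-2}(p_0-m)L_m(p_0-m)$ for $\alpha\geq 2$, and both reduce the statement for all $\alpha$ to an $\alpha$-free inequality by dividing out $p_0^{\alpha-2}$ and handling $\alpha=1$ separately. Where you genuinely diverge is the lower bound on $\rho=L_m(p_0-m)/(p_0-m)=\prod_{q\mid p_0-m}\left(1-\frac{m}{q}\right)$: the paper fixes the modulus $\prod_{p\leq m}p$, bounds $\rho\geq\prod_{m<p\leq p_0}\left(1-\frac{m}{p}\right)$, and invokes the Mertens-type asymptotic $\sim c_m/(\log p_0)^m$ from Rosser--Schoenfeld, which beats $Ap_0^{-\delta}$ uniformly for every $\delta>0$; you instead enlarge the modulus to $\prod_{q\leq N}q$ with $N>2m$ and $N\log N>4m/\delta$, forcing every prime factor of $p_0-m$ above $N$, and deduce $\rho\geq(p_0-m)^{-\delta/2}$ from the elementary facts $\log(1-m/q)\geq -2m/q$ (valid since $m/q<1/2$) and $\omega(p_0-m)\leq\log(p_0-m)/\log N$, where $\omega$ counts distinct prime factors. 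This trades the analytic input (Mertens) for a modulus depending on $\delta$ plus the reduction to $\delta\leq 1$, which you justify correctly since increasing $\delta$ only weakens the inequality. Your route is more elementary; the paper's keeps the arithmetic progression independent of $\delta$ and needs no case split on the size of $\delta$. The details I checked --- feasibility of the congruences at $q=2$, the multiplicativity step giving the iterate formulas, and the final exponent comparisons $2-\delta/2>\max(1,2-\delta)$ and $1-\delta/2>\max(0,1-\delta)$ for sufficiently large $p_0$ --- all go through.
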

\begin{proof}
Fix $m$, $A$, and $\delta$ to be positive real numbers, where $m$ is an even integer. Let $p_1,p_2,\ldots,p_r$ be all the primes that divide $m$, and let $q_1,q_2,\ldots,q_t$ be all the primes that are less than $m$ and do not divide $m$. For each $j\in \{1,2,\ldots,t\}$, define $\sigma_j$ by \[\sigma_j=\begin{cases} 1, & \mbox{if } m\not\equiv 1 \pmod{q_j}; \\
-1, & \mbox{if } m\equiv 1 \pmod{q_j}. \end{cases}\]
Write $\displaystyle M=\prod_{p \leq m}p$. By the Chinese remainder theorem, there exists a unique solution modulo $M$ to the system of congruences defined by
\begin{equation}\label{4}
\begin{cases} x\equiv 1 \pmod{p_i} & \mbox{if } i\in \{1,2,\ldots,r\}; \\
x \equiv \sigma_j \pmod{q_j}  & \mbox{if } j\in \{1,2,\ldots,t\}. \end{cases}
\end{equation}
It is easy to see that if $x_0$ is a solution to \eqref{4}, then $x_0$ and $x_0-m$ are each relatively prime to every prime less than or equal to $m$. By Dirichlet's theorem concerning the infinitude of primes in arithmetic progressions, there must be infinitely many primes that satisfy the system \eqref{4}. Let $p_0$ be one such prime, and write \[\beta=\prod_{p\vert p_0-m}\left(1-\frac{m}{p}\right).\]
As $p_0$ is relatively prime to all primes less than or equal to $m$, we have \[\prod_{m<p\leq p_0}\left(1-\frac{m}{p}\right)\leq \beta \leq 1.\]
It is well-known \cite{rosser62}, that, as $p_0\rightarrow\infty$, \[\prod_{m<p\leq p_0}\left(1-\frac{m}{p}\right)\sim\frac{c_m}{(\log p_0)^m}\] for some constant $c_m$ that depends only on $m$. We find that
\[\beta p_0^2\geq p_0^2\prod_{m<p\leq p_0}\left(1-\frac{m}{p}\right)\sim\frac{c_mp_0^2}{(\log p_0)^m}.\]
Therefore, we may choose $p_0$ to be large enough so that $\beta p_0^2>Ap_0^{2-\delta}+3mp_0$. With this choice of $p_0$, we may write $\beta(p_0-m)^2>\beta p_0^2-2\beta mp_0\geq \beta p_0^2-2mp_0>Ap_0^{2-\delta}+mp_0$. But $\beta (p_0-m)=L_m(p_0-m)$ because $p(p_0-m)>m$. Thus,
\begin{equation}\label{5}
(p_0-m)L_m(p_0-m)>Ap_0^{2-\delta}+mp_0.
\end{equation}
Let $\alpha$ be an integer, and, for now, assume $\alpha\geq 2$. Rearranging and multiplying the inequality \eqref{5} by $p_0^{\alpha-2}$, we have $-mp_0^{\alpha-1}+p_0^{\alpha-1}L_m(p_0-m)>mp_0^{\alpha-2}L_m(p_0-m)+Ap_0^{\alpha-\delta}$. After further algebraic manipulation, we find $p_0^{\alpha-1}(p_0-m)+p_0^{\alpha-2}(p_0-m)L_m(p_0-m)>p_0^{\alpha}+Ap_0^{\alpha-\delta}$. Noticing that the left-hand side of the preceding inequality is simply $L_m(p_0^{\alpha})+L_m^{(2)}(p_0^{\alpha})$, we have $L_m(p_0^{\alpha})+L_m^{(2)}(p_0^{\alpha})>p_0^{\alpha}+Ap_0^{\alpha-\delta}$. This is the desired result for $\alpha\geq 2$. To show that the result holds when $\alpha=1$, it suffices to show that $L_m(p_0)+L_m^{(2)}(p_0)>\displaystyle{\frac{L_m(p_0^2)+L_m^{(2)}(p_0^2)}{p_0}}$. This reduces to $p_0-m+L_m(p_0-m)>p_0-m+\displaystyle{\frac{p_0-m}{p_0}L_m(p_0-m)}$, which is obviously true.
\end{proof}
\begin{corollary} \label{Cor2.1}
For any positive even integer $m$, there exist infinitely many $D_m$-abundant numbers.
\end{corollary}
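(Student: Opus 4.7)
The plan is to deduce the corollary immediately from Theorem \ref{Thm2.5}. After fixing a positive even integer $m$, I would apply Theorem \ref{Thm2.5} with any convenient choice of positive constants --- say $A=\delta=1$. Specializing the conclusion to $\alpha=1$ would produce infinitely many primes $p_0$ satisfying
\[L_m(p_0)+L_m^{(2)}(p_0)>p_0+Ap_0^{1-\delta}>p_0.\]
My plan is to show that each sufficiently large such $p_0$ is itself $D_m$-abundant, which is clearly enough to give infinitely many $D_m$-abundant numbers.

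Next, I would verify that $D_m(p_0)\geq L_m(p_0)+L_m^{(2)}(p_0)$. By the definition of $D_m$ and $R_m$, this reduces to showing $R_m(p_0)\geq 2$, i.e., that $L_m(p_0)\notin\{0,1\}$. Since $L_m(p_0)=p_0-m>1$ whenever $p_0>m+1$ (which is automatic for large primes in the family produced by Theorem \ref{Thm2.5}, because those primes are coprime to every prime $\leq m$ and eventually exceed any fixed bound), this is immediate. Combining with the displayed inequality would then yield $D_m(p_0)>p_0$, so $p_0$ is $D_m$-abundant.

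There is no genuine obstacle here: the corollary is essentially just bookkeeping once Theorem \ref{Thm2.5} is in hand. The only point requiring any attention is the observation that the two iterates appearing in the theorem's lower bound are genuinely part of the sum $\sum_{i=1}^{R_m(p_0)}L_m^{(i)}(p_0)$ defining $D_m(p_0)$, and this follows at once from the estimate $L_m(p_0)=p_0-m>1$.
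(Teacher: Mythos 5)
Your proposal is correct and is exactly the deduction the paper intends (the paper states Corollary \ref{Cor2.1} as an immediate consequence of Theorem \ref{Thm2.5} without further argument): take infinitely many primes $p_0$ from the theorem, note $L_m(p_0)=p_0-m>1$ for $p_0>m+1$ so that $R_m(p_0)\geq 2$, and conclude $D_m(p_0)\geq L_m(p_0)+L_m^{(2)}(p_0)>p_0$. The bookkeeping point you flag---that both iterates really appear in the sum defining $D_m(p_0)$---is precisely the only thing that needed checking.
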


We conclude this section with a remark about $D_m$-perfect numbers. Using \textit{Mathematica}, one may check that for $m\in \{2,4,6\}$, the only $D_m$-perfect number less than $100,000$ is $37,147$, which is $D_2$-perfect. Unfortunately, this data is too scarce to make any reasonable conjecture about the nature or distribution of $D_m$-perfect numbers for positive even integers $m$.
\section{Numerical Analysis and\\ Concluding Remarks}

In 1943, H. Shapiro investigated a function $C$, which counts the number of iterations of the $\phi$ function needed to reach $2$ \cite{shapiro43}. Shapiro showed that the function $C$ is additive, and he established bounds for its values. In this paper, we have not gone into much detail exploring the functions $R_m$ because they prove, in general, to be either completely uninteresting or very difficult to handle. For example, for any integer $n>1$,
\[R_3(n)=\begin{cases} 1, & \mbox{if } n \not\equiv 1,5 \pmod{6}; \\
2, & \mbox{if } n\equiv 1,5 \pmod{6}. \end{cases}\]
On the other hand, the function $R_4$ does not seem to obey any nice pattern or exhibit any sort of nice additive behavior. There seems to be some hope in analyzing the function $R_2$, so we make the following conjecture.
\begin{conjecture}
If $x>3$ is an odd integer, then
\[R_2(x)\geq \frac{\log \left(\frac{49}{15}x\right)}{\log 7}.\]
\end{conjecture}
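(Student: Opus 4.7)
My plan is to reformulate the inequality as the equivalent bound $x \leq 15 \cdot 7^{R_2(x) - 2}$ and attempt strong induction on $x$.

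For the base cases, I would verify the bound directly for odd integers $5 \leq x \leq 15$; note that $R_2(15) = 2$ and $15 = 15 \cdot 7^{0}$, so the bound is tight there. For an odd $x > 15$, Proposition \ref{Res1.3} ensures $L_2(x)$ is odd, and a direct check shows that the only odd integers with $L_2(x) \in \{1,3\}$ are $x \in \{3,5,9,15\}$, so $L_2(x) > 3$. The induction hypothesis then gives $L_2(x) \leq 15 \cdot 7^{R_2(x) - 3}$, so it would suffice to establish the single-step bound $x \leq 7 L_2(x)$.

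Writing $x/L_2(x) = \prod_{p \mid x} p/(p - 2)$ shows that this ratio is at most $7$ whenever the prime divisors of $x$ lie in $\{3,5,7\}$, with equality exactly when $3 \cdot 5 \cdot 7 \mid x$. The obstruction is therefore the case in which $x$ has a prime divisor $q \geq 11$ together with $\{3, 5, 7\}$. To handle this, I would strengthen the induction hypothesis by separating cases on the presence of small prime factors: empirical data for $R_2(x) \leq 5$ consistently identifies $x = 3 \cdot 5 \cdot 7^{k-2}$ as the unique extremal integer with $R_2(x) = k$, which suggests the refined bound $x \leq 5 \cdot 7^{R_2(x) - 2}$ whenever $3 \nmid x$. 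The slack gained in this sub-hypothesis should absorb the excess one-step ratio via a case analysis that tracks how a prime $q \geq 11$ dividing $x_i$ is replaced in $x_{i+1}$ by the smaller prime divisors of $q - 2$.

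The main obstacle is that the compensation must be a genuinely global feature of the trajectory rather than a bounded-step phenomenon: for any fixed $j$, there exist odd $x$ with $x / L_2^{(j)}(x) > 7^{j}$ (for instance, $x = 3 \cdot 5 \cdot 7 \cdot 11 \cdot 13 \cdot 17$ already satisfies $x/L_2^{(3)}(x) > 343 = 7^3$), so no iterated one-step argument can succeed. A natural substitute is to construct a potential function $g$ on the odd positive integers satisfying $g(1) = 0$, $g(x) \geq \log(49x/15)$ for odd $x > 3$, and $g(x) - g(L_2(x)) \leq \log 7$ for every odd $x > 1$, with equality realized along the extremal family $3 \cdot 5 \cdot 7^{k-2}$. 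Designing such a potential with the correct sharp constants appears to be the core technical difficulty of this conjecture.
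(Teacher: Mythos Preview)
The statement you are attempting to prove is labeled a \emph{Conjecture} in the paper and carries no proof there; the author merely records it as an open problem suggested by numerical data. So there is no argument in the paper to compare against.

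Your proposal, for its part, is not a proof either, and you essentially say so yourself. The reformulation $x\le 15\cdot 7^{R_2(x)-2}$ and the identification of the extremal family $3\cdot 5\cdot 7^{k-2}$ are correct and useful observations, but the inductive scheme collapses exactly where you note: the one-step inequality $x\le 7L_2(x)$ fails as soon as $x$ is divisible by $3\cdot 5\cdot 7\cdot q$ with $q\ge 11$, and your own example with $x=3\cdot5\cdot7\cdot11\cdot13\cdot17$ shows that no fixed-depth iterated bound $x\le 7^{j}L_2^{(j)}(x)$ can rescue the argument. The suggested remedy---a potential function $g$ with $g(x)-g(L_2(x))\le\log 7$ that is sharp on the extremal family---is precisely the missing ingredient, and you do not construct it. The auxiliary hypothesis $x\le 5\cdot 7^{R_2(x)-2}$ when $3\nmid x$ is plausible from small data but is itself unproved and would require its own version of the same global compensation.

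In short: the paper offers no proof, and your outline locates the difficulty accurately but does not overcome it. The conjecture remains open.
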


We note that it is not difficult to prove, using Lemma \ref{Lem2.1} and a bit of case work, that $R_2(x)\leq 3\displaystyle{\frac{\log(x+2)}{\log3} -3}$ for all integers $x>1$ (with equality only at $x=7$). However, as Figure 1 shows, this is a very weak upper bound (at least for relatively small $x$). It is tempting to think, based on the figure, that $R_2(x)\leq 3+\displaystyle{\frac{\log x}{\log 3}}$ for all positive integers $x$. However, setting $x=480,314,203$ yields a counterexample because $3+\displaystyle{\frac{\log 480,314,203}{\log 3}\approx 21.196<22=R_2(480,314,203)}$.

\begin{figure}
\epsfysize=7cm \epsfbox{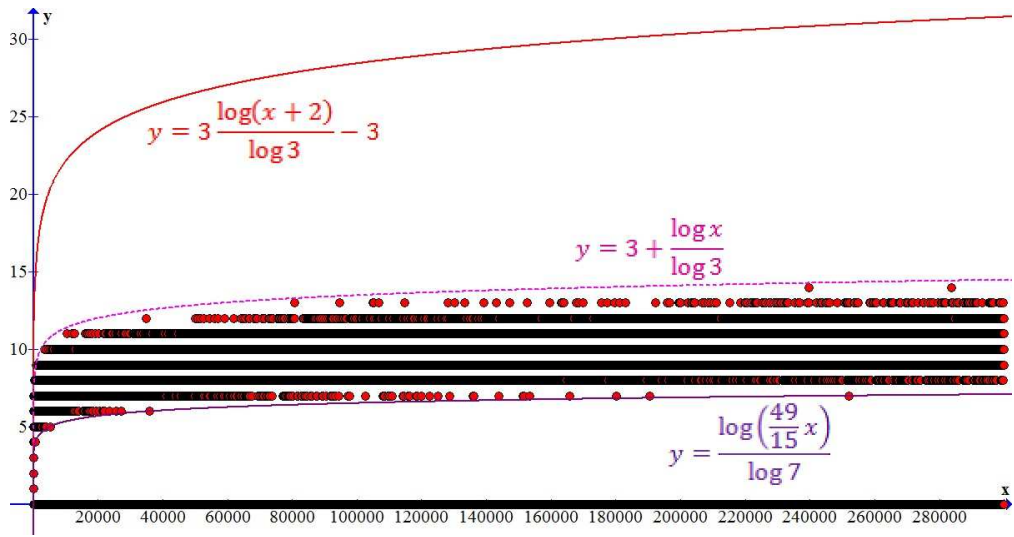}
\caption{A plot of the first $300,000$ values of the function $R_2$, as well as some important curves. Note that the black streaks in the figure are, in actuality, several overlapping dots.}
\end{figure}

The author has found that investigating bounds of the function $R_2$ naturally leads to a question about the infinitude of twin primes, which hints at the potential difficulty of the problem. Indeed, Harrington and Jones \cite{harrington10} have arrived at the same conclusion while studying the function $C_2(x):=R_2(x)-1$, and they conjecture that the values of $C_2(x)+C_2(y)-C_2(xy)$ can be arbitrarily large. To avoid the unpredictability of the values of the function $C_2$, Harrington and Jones have restricted the domain of $C_2$ to the set $D$ of positive integers $k$ with the property that none of the numbers in the set $\{k,L_2(k),L_2^{(2)}(k),\ldots\}$ has a prime factor that is congruent to $1$ modulo $3$. With this restriction of the domain of $C_2$, these two authors have established results analogous to those that Shapiro gave for the function $C$ mentioned earlier. In fact, we speculate that methods analogous to those that Harrington and Jones have used could easily generalize to allow for analogous results concerning functions $C_m(x):=R_m(x)-1$ if one is willing to use a sufficiently restricted domain of $C_m$.  

We next remark that, in Theorem \ref{Thm2.4}, the requirement that $m\not=4$ is essential. For example, write $p_1=306,167$, $p_2=4+p_1^2$, $p_3=4+p_2^2$, $p_4=4+p_3^2$, and $p_5=4+p_4^2$. Then the number $5p_5$ is a $D_4$-abundant multiple of 5.

Lastly, we have not spent much effort analyzing the ``sizes" of the functions $D_m$ or searching for $D_m$-perfect numbers. We might inquire about the average order or possible upper and lower bounds for $D_m$ for a general positive even integer $m$. In addition, it is natural to ask if there even are any $D_m$-perfect numbers other than $37,147$ for even positive integers $m$.

\section{Acknowledgments and Dedications}
Dedicated to my parents Marc and Susan, my brother Jack, and my sister Juliette.

Also dedicated to Mr.~Jacob Ford, who wrote a program to find values of the functions $L_m$, $R_m$, and $H_m$. Mr.~Ford and my father also sparked my interest in computer programming, which I used to analyze the functions $D_m$.

Finally, I would like to thank the unknown referee for taking the time to read carefully through my work and for his or her valuable suggestions.

\bigskip
\hrule
\bigskip

\noindent 2010 {\it Mathematics Subject Classification}:  Primary 11N64; Secondary 11B83.

\noindent \emph{Keywords: } Schemmel totient function, iterated arithmetic function, summatory function, perfect totient number.

\bigskip
\hrule
\bigskip

\noindent (Concerned with sequences A000010, A003434, A058026, A092693, A123565, A241663, A241664, A241665, A241666, A241667, and A241668.)

\begin{thebibliography}{9}

\bibitem{harrington10}
J. Harrington and L. Jones, On the iteration of a function related to Euler's $\phi$-function. \emph{Integers} {\bf 10} (2010), 497--515. 

\bibitem{herzog92}
J. Herzog and P. R. Smith, Lower bounds for a certain class of error functions. \emph{Acta Arith.} {\bf 60} (1992), 289--305.

\bibitem{iannucci03}
D. Iannucci, M. Deng, and G. Cohen, On perfect totient numbers. \emph{J. Integer Seq.} {\bf 6} (2003), Article 03.4.5.

\bibitem{luca06}
F. Luca, On the distribution of perfect totients. \emph{J. Integer Seq.} {\bf 9} (2006), Article 06.4.4.

\bibitem{pillai29}
S. S. Pillai, On a function connected with $\phi(n)$. \emph{Bull. Amer. Math. Soc.} {\bf 35} (1929), 837--841.

\bibitem{rosser62}
J. Rosser and L. Schoenfeld. Approximate formulas for some functions of prime numbers. \emph{Illinois J. Math.} {\bf 6} (1962), 64--94.

\bibitem{schemmel69}
V. Schemmel, \"{U}ber relative Primzahlen, \emph{Journal f\"{u}r die reine und angewandte Mathematik}, {\bf 70} (1869), 191--192. 

\bibitem{shapiro43}
H. Shapiro, An arithmetic function arising from the $\phi$ function. \emph{Amer. Math. Monthly} {\bf 50} (1943), 18--30.

\bibitem{white62}
G. K. White, Iterations of generalized Euler functions. \emph{Pacific J. Math.} {\bf{12}}
(1962), 777--783.

\end{thebibliography}
\end{document}